\renewcommand{\theequation}{\thesection.\arabic{equation}}
\newtheorem{defn}{Definition}[section]
\newtheorem{lem}{Lemma}[section]
\newtheorem{thm}{Theorem} [section]
\newtheorem{coro}{Corollary}[section]
\title{ Permutation Polynomials and their Compositional Inverses
\thanks{Supported By NSF of China No. 12171163 }}
\author{Pingzhi Yuan\thanks{ P. Yuan is with School of  of Mathematical Science, South China Normal University,  Guangzhou 510631, China (email: yuanpz@scnu.edu.cn).}}
    \date{}
\begin{document}
\baselineskip15pt \maketitle
\renewcommand{\theequation}{\arabic{section}.\arabic{equation}}
\catcode`@=11 \@addtoreset{equation}{section} \catcode`@=12

    \begin{abstract}In this paper, we  prove that every PP is an AGW-PP. We also extend the result of Wan and Lidl  to  other permutation polynomials over finite fields and determine their group structure. Moreover, we provide a new  general method to find the compositional inverses of all PPs, some new PPs and their compositional inverses are given.

\end{abstract}

{\bf Keywords:}
 Finite fields, permutation polynomials, AGW criterion,  compositional inverses, Wreath product.

\section{Introduction}

\,\,\, Let $q$ be a prime power, $\mathbb{F}_q$ be the finite field of order $q$, and $\mathbb{F}_q[x]$
be the ring of polynomials in a single indeterminate $x$ over $\mathbb{F}_q$. A polynomial
$f \in\mathbb{F}_q[x]$ is called a {\em permutation polynomial} (PP for short) of $\mathbb{F}_q$ if it induces
a bijective map from $\mathbb{F}_q$ to itself. The  unique polynomial denoted by $f^{-1}(x)$ over $\mathbb{F}_q$ such that $f\circ f^{-1}=f^{-1}\circ f=I$ is called the compositional inverse of $f(x)$, where $I$ is the identity map.  Furthermore, we call $f (x)$ an
involution when $f^{-1}(x)=f(x)$.

Permutation polynomials over finite fields  and their compositional inverses have wide applications in coding
theory \cite{Ding13, DZ14, LC07}, cryptography \cite{RSA, SH}, combinatorial
design theory \cite{DY06}, and other
areas of mathematics and engineering \cite{LN97,LN86, Mull}.

Constructing PPs and explicitly determining the compositional inverse of a PP are useful because a PP and its inverse are required in many applications. For example, involutions are particularly useful ( as a part of a block ciper) in devices with limited resources. However, it is difficult to determine whether  a given polynomial over a finite field is a PP or not, and it is not easy to find the explicit compositional inverse of a random PP, except for several well-known classes of PPs, which have very nice structure. See  see \cite{CH02, LQW19, LN97, NLQW21, TW14, TW17, W17, Wu14, WL13, WL13J, ZYLHZ19, ZY18, ZWW20} for more details.

In 2011, Akbrary, Ghioca and Wang  \cite{AGW11} proposed a powerful method called the AGW criterion for constructing PPs. A PP is called AGW-PP when a PP is constructed using the AGW criterion or it can be interpreted by the  AGW criterion \cite{NLQW21}.    Niu, Li, Qu and Wang \cite{NLQW21} obtained a general method to finding compositional inverses of AGW-PPs. They obtained the compositional inverses of all AGW-PPs of the type $x^rh(x^s)$ over $\mathbb{F}_q$, where $s|q-1$.  Recently, the author \cite{Y22} used the dual diagram to improve the results in \cite{NLQW21}.

Permutation polynomials of the form $x^rh(x^{(q-1)/d})$ have been paid particular attention, where $d$ is a divisor of $q-1$. These PPs originated from
the work of Rogers and Dickson [2] in the 19th century, who first treated the case $h(x)=g(x)^d$. For the group structure of these PPs, Wells ([17], [18]) proved that in the case $r = 1$ the set of PPs of the form $xf(x^{ (q-1)/d})$ is a group isomorphic to the wreath product of a cyclic group of order $(q-1)/d$ with the
symmetric group $S_d$. Wan and Lidl  \cite{WL17} showed that all such PPs form a group isomorphic to a generalized wreath product of certain abelian groups.

In the present paper, we first prove that every PP is an AGW-PP. Next we extend the result of Wan and Lidl \cite{WL17} to  other permutation polynomials over finite fields and determine their group structure. Finally, we present a general method to find the compositional inverses of PPs.

The rest of this paper is organized as follows. In Section 2, we present some preliminaries which are used in the sequel. In Section 3, we prove that every PP is an AGW-PP. In Section 4, we use the  diagram in Section 2 to determine their group structure of PPs with some special properties. We present a general method to find the compositional inverses of PPs in Section 5. As an application the results in Section 5, we present some new PPs and their compositional inverses in Section 6.

\section{Preliminaries}

The following lemma is taken from \cite[Lemma 1.1]{AGW11}, which is called AGW criterion now.

\begin{lem}\label{lem-1.1}   {\rm ( AGW criterion) }
Let $A, S$ and $\bar{S}$ be finite sets with $\sharp
S=\sharp\bar{S}$, and let $f:A\rightarrow A, h:
S\rightarrow\bar{S}$, $\lambda: A\rightarrow S$, and $\bar{\lambda}:
A\rightarrow \bar{S}$ be maps such that $\bar{\lambda}\circ
f=h\circ\lambda$. If both $\lambda$ and $\bar{\lambda}$ are
surjective, then the following statements are equivalent:

(i) $f$ is a bijective (a permutation of $A$); and

(ii) $h$ is a bijective from $S$ to $\bar{S}$ and if $f$ is
injective on $\lambda^{-1}(s)$ for each $s\in S$.
\end{lem}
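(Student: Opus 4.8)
The plan is a direct diagram chase based on the commutativity hypothesis $\bar\lambda\circ f=h\circ\lambda$ and the surjectivity of $\lambda$ and $\bar\lambda$; I will be careful to flag exactly where the finiteness of $A$ and the cardinality condition $\sharp S=\sharp\bar S$ enter, since these are the only nonformal ingredients. To begin, I record the trivial reductions afforded by finiteness: a self-map of the finite set $A$ is bijective iff injective iff surjective, and a map $S\to\bar S$ between finite sets of equal cardinality is bijective iff surjective.

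For (i)$\Rightarrow$(ii), assume $f$ is a permutation of $A$. First I would check that $h$ is surjective: given $\bar s\in\bar S$, surjectivity of $\bar\lambda$ yields $a\in A$ with $\bar\lambda(a)=\bar s$, and surjectivity of $f$ yields $b\in A$ with $f(b)=a$, whence $h(\lambda(b))=\bar\lambda(f(b))=\bar\lambda(a)=\bar s$. Since $\sharp S=\sharp\bar S<\infty$, surjectivity upgrades to bijectivity, so $h$ is a bijection. The remaining clause of (ii) is immediate, because $f$ is injective on all of $A$ and hence on each fiber $\lambda^{-1}(s)$.

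For (ii)$\Rightarrow$(i), assume $h$ is a bijection and $f$ is injective on $\lambda^{-1}(s)$ for every $s\in S$. It suffices to prove $f$ is injective on $A$, since $A$ is finite. Suppose $f(a)=f(b)$; applying $\bar\lambda$ and using commutativity gives $h(\lambda(a))=\bar\lambda(f(a))=\bar\lambda(f(b))=h(\lambda(b))$, and injectivity of $h$ forces $\lambda(a)=\lambda(b)=:s$. Then $a$ and $b$ both lie in $\lambda^{-1}(s)$, on which $f$ is injective, so $a=b$. Hence $f$ is injective, therefore a permutation of $A$.

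There is no real obstacle here; the one point worth stating explicitly is that the hypothesis $\sharp S=\sharp\bar S$ is precisely what turns the surjectivity of $h$ produced in the first implication into a bijection (one cannot deduce injectivity of $h$ directly, since $\bar\lambda$ need not be injective), while the finiteness of $A$ is precisely what turns the injectivity of $f$ produced in the second implication into the desired bijectivity.
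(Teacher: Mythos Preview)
Your proof is correct and is the standard diagram chase for the AGW criterion. Note, however, that the paper does not actually supply a proof of this lemma: it is quoted verbatim from \cite{AGW11} (Lemma~1.1 there) and stated without proof, so there is no in-paper argument to compare against. Your write-up matches the original argument of Akbary--Ghioca--Wang, and your remarks on where finiteness of $A$ and the equality $\sharp S=\sharp\bar S$ are genuinely used are accurate.
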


The following definitions can be found in \cite[Chapter 7]{Rot94}.

Let $K$ be a (not necessarily normal) subgroup of a group $G$. Then a subgroup $Q\le G$ is a {\it complement} of $K$ in $G$ if $K\cap Q=1$ and $KQ=G$, where $KQ=\{kq, \, k\in K, q\in Q\}$.

\begin{defn} A group $G$ is a {\bf semidirect product} of $K$ by $Q$, denoted by $G=K\rtimes Q$, if $K\triangleleft G$ and $K$ has a complement $Q_1\cong Q$. One also says that $G$ {\bf splits} over $K$. \end{defn}

\begin{defn}Let $D$ and $Q$ be groups, let $\Omega$ be a finite $Q$-set, and let $K=\prod_{\omega\in\Omega}D_\omega$, where $D_\omega\cong D$ for all $\omega\in\Omega$. Then the {\bf wreath product} of $D$ by $Q$ denoted by $D\wr Q$ is the semiproduct of $K$ by $Q$, where $Q$ acts on $K$ by $q\cdot(d_\omega)=(d_{q\omega})$ for $q\in Q$ and $d_\omega)\in \prod_{\omega\in\Omega}D_\omega$. The normal subgroup $K$ of $D\wr Q$ is called the {\bf base} of the wreath product. \end{defn}

If $D$ is finite, then $|K|=|D|^{|\Omega|}$, if $Q$ is also finite, then $|D\wr Q|=|K\triangleleft Q|=|K||Q|=|D|^{|\Omega|}|Q|$.

\section{AGW-PP or not?}
A PP is called AGW-PP when a PP is constructed using the AGW criterion or it can be interpreted by the  AGW criterion in \cite{NLQW21}. In this  section, we first prove that  every PP is  an AGW-PP, so we will not use the notion AGW-PP from now on. We have
\begin{thm}\label{AGW}Let $A, S$ be finite sets and let $f:A\rightarrow A$ be a map and
 $\varphi: A\rightarrow S$ be a surjective map. Then $f$ is a bijection if and only if the following conditions hold:

 (i) $f(x)$ is injective on each $\lambda^{-1}(s)$ for all $s\in S$;

 (ii) for each bijection $h: S\rightarrow S$, there exists an uniquely determined surjective map $\psi: A\rightarrow S$ such that the following diagram commutes
 $$\xymatrix{
  A \ar[d]_{\varphi} \ar[r]^{f}
                & A \ar[d]^{\psi}  \\
  S  \ar[r]_{h}
                & S          }$$

 \end{thm}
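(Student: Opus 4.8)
The plan is to derive this essentially as a reformulation of Lemma \ref{lem-1.1} (the AGW criterion), exploiting the freedom that both $S$ and $\bar S$ here are the \emph{same} set and that we are allowed to precompose/postcompose with an arbitrary bijection $h$ of $S$. First I would settle the easy direction: suppose $f$ is a bijection. Then for each $s\in S$, $f$ is certainly injective on $\varphi^{-1}(s)$, giving (i). For (ii), fix a bijection $h:S\to S$; I would simply \emph{define} $\psi:=h\circ\varphi\circ f^{-1}$. This is a composition of a surjection with two bijections, hence surjective, and by construction $\psi\circ f=h\circ\varphi$, so the square commutes. For uniqueness: if $\psi'$ is any map making the square commute, then $\psi'\circ f=h\circ\varphi=\psi\circ f$, and since $f$ is surjective (indeed bijective) this forces $\psi'=\psi$. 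So the forward direction is immediate and purely formal.

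For the converse, assume (i) and (ii). The natural move is to apply (ii) with $h=\mathrm{id}_S$, obtaining a surjective $\psi:A\to S$ with $\psi\circ f=\varphi$. Now I would invoke Lemma \ref{lem-1.1} with the identifications $A=A$, $S=S$ (the ``source'' copy), $\bar S=S$ (the ``target'' copy), the map $f:A\to A$, $\lambda:=\psi$, $\bar\lambda:=\varphi$, and $h$ the identity $S\to S$ — here the commuting relation $\bar\lambda\circ f=h\circ\lambda$ reads $\varphi\circ f=\psi$... which is \emph{not} what (ii) gives us directly. So I would instead orient the diagram the other way: take $\lambda:=\varphi$ and $\bar\lambda:=\psi$, with the connecting map being $\mathrm{id}_S$; then $\bar\lambda\circ f=\psi\circ f=\varphi=\mathrm{id}_S\circ\varphi=\mathrm{id}_S\circ\lambda$, so the hypothesis of Lemma \ref{lem-1.1} holds with $h=\mathrm{id}$. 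Both $\lambda=\varphi$ and $\bar\lambda=\psi$ are surjective. By Lemma \ref{lem-1.1}, $f$ is a bijection of $A$ if and only if $\mathrm{id}:S\to S$ is a bijection (trivially true) and $f$ is injective on each $\lambda^{-1}(s)=\varphi^{-1}(s)$ — which is exactly condition (i). Hence $f$ is a bijection.

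The one technical point requiring care — and the place I expect the only real friction — is the direction of the commuting square and making sure the version of (ii) I use is the one I can actually obtain. Condition (ii) asserts existence of $\psi$ with $\psi\circ f=h\circ\varphi$, i.e. the square with $\varphi$ on the left and $\psi$ on the right; to feed this into Lemma \ref{lem-1.1} I must match it to the shape $\bar\lambda\circ f=h\circ\lambda$, which means reading $\psi$ as $\bar\lambda$ and $\varphi$ as $\lambda$ (taking the horizontal map of the AGW diagram to be $\mathrm{id}$, or more generally $h^{-1}\circ(\text{something})$ — but the $h=\mathrm{id}$ instance suffices). Once that bookkeeping is pinned down, everything else is formal. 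I would also remark that the uniqueness clause in (ii) is automatic from surjectivity of $f$ and is not needed for the equivalence itself; it is recorded because in applications $\psi$ is explicitly $h\circ\varphi\circ f^{-1}$, which is the germ of the compositional-inverse method developed in later sections.
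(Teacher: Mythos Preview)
Your proof is correct and follows essentially the same route as the paper. In the forward direction you write $\psi=h\circ\varphi\circ f^{-1}$ explicitly, which is exactly the map the paper defines piecewise by $\psi(a)=h(s)$ for $a\in f(\varphi^{-1}(s))$; for the converse the paper just says ``follows directly from AGW criterion,'' and you have correctly unpacked this by taking $h=\mathrm{id}_S$, setting $\lambda=\varphi$, $\bar\lambda=\psi$, and reading off the conclusion from Lemma~\ref{lem-1.1}.
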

\begin{proof} Suppose that $f$ is a bijection, then (i) holds trivially. For each bijection $h: S\rightarrow S, s\mapsto h(s), s\in S$, we define the map
$$\psi: A \rightarrow S, \quad \psi(a)=h(s) $$
for all $a\in f(\varphi^{-1}(s)), s\in S$, where $\varphi^{-1}(s)$ is the inverse image of $s$ in $A$ and $f(\varphi^{-1}(s))=\{f(a), a\in \varphi^{-1}(s)\}$ is the set of all images of $a\in \varphi^{-1}(s)$.
Then
$$\psi\circ  f(a)=\psi(f(a))=h(s)=h(\varphi(a))=h\circ \varphi(a)$$
for any $a\in\varphi^{-1}(s)$, and it follows that $\psi\circ f=h\circ \varphi$, i.e. the above diagram commutes. It is easy to see that the surjective $\psi: A \rightarrow S$  is uniquely determined by $f, \varphi, $ and $h$.

The other direction follows directly from AGW criterion. \end{proof}

{\bf Remark:} By Theorem \ref{AGW}, we see that for any given bijection $f:A\rightarrow A$, a surjective
 $\varphi: A\rightarrow S$ and a  bijection $h: S\rightarrow S$, there exists an uniquely determined surjective map $\psi: A\rightarrow S$ such that the following diagram commutes
 $$\xymatrix{
  A \ar[d]_{\varphi} \ar[r]^{f}
                & A \ar[d]^{\psi}  \\
  S  \ar[r]_{h}
                & S          }$$

\section{The Group Structure of PPs}
In 1991,  Wan and Lidl \cite{WL17} showed that all permutation polynomials of the form $x^rh(x^s)$ from a group $G(d, q)$ under composition and this group is isomorphic to a generalized wreath product. In this section, we will prove a general result. We need the following two lemmas.

\begin{lem}{\rm \cite[Theorem 4.1]{Y22}} \label{Gp} Let $A, S$ be finite sets and let
 $\varphi: A\rightarrow S$ be a surjective map. Let $G_\varphi$ be the set of all bijections $f: A\rightarrow A$ such that $\pi_f\circ \varphi=\varphi\circ f$, where $\pi_f:S\rightarrow S$ is a bijection. Then $G_\varphi$ is a group under composition.\end{lem}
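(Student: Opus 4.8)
The plan is to check the three group axioms for $(G_\varphi,\circ)$ directly; associativity is automatic since composition of maps is associative, so the work reduces to closure, the identity, and inverses. The one preliminary point worth isolating is that the bijection $\pi_f$ appearing in the defining relation $\pi_f\circ\varphi=\varphi\circ f$ is \emph{uniquely} determined by $f$ (and $\varphi$): if $\varphi(a)=\varphi(a')$, then $\varphi(f(a))=\pi_f(\varphi(a))=\pi_f(\varphi(a'))=\varphi(f(a'))$, so $\varphi\circ f$ factors through $\varphi$, and since $\varphi$ is surjective every $s\in S$ equals $\varphi(a)$ for some $a$, whence $\pi_f(s):=\varphi(f(a))$ is a consistent and complete prescription. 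Thus $f\mapsto\pi_f$ is a well-defined assignment $G_\varphi\to\mathrm{Sym}(S)$, and I would track it throughout the proof.

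For closure, take $f,g\in G_\varphi$. Then $\varphi\circ(f\circ g)=(\varphi\circ f)\circ g=(\pi_f\circ\varphi)\circ g=\pi_f\circ(\varphi\circ g)=\pi_f\circ\pi_g\circ\varphi=(\pi_f\circ\pi_g)\circ\varphi$, while $f\circ g$ is a bijection of $A$ and $\pi_f\circ\pi_g$ is a bijection of $S$; hence $f\circ g\in G_\varphi$ with $\pi_{f\circ g}=\pi_f\circ\pi_g$. The identity $\mathrm{id}_A$ lies in $G_\varphi$ with $\pi_{\mathrm{id}_A}=\mathrm{id}_S$, since $\mathrm{id}_S\circ\varphi=\varphi=\varphi\circ\mathrm{id}_A$.

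The only step carrying any content is closure under inverses. Given $f\in G_\varphi$, it is a bijection of $A$, so $f^{-1}$ exists; composing $\pi_f\circ\varphi=\varphi\circ f$ on the left with $\pi_f^{-1}$ and on the right with $f^{-1}$ yields $\varphi\circ f^{-1}=\pi_f^{-1}\circ\varphi$, and since $\pi_f$ is a bijection of the finite set $S$ its inverse $\pi_f^{-1}$ is again a bijection of $S$, so $f^{-1}\in G_\varphi$ with $\pi_{f^{-1}}=\pi_f^{-1}$. This finishes the verification, and as a by-product $f\mapsto\pi_f$ is a group homomorphism $G_\varphi\to\mathrm{Sym}(S)$, which is exactly the structure exploited in the wreath-product description that follows. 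I do not anticipate a genuine obstacle; the only thing requiring care is not to treat the commuting relation as defining $\pi_f$ only up to ambiguity — surjectivity of $\varphi$ removes that ambiguity, so $\pi_f$, $\pi_{f\circ g}=\pi_f\circ\pi_g$, and $\pi_{f^{-1}}=\pi_f^{-1}$ are all unambiguous.
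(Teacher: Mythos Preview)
Your verification is correct and complete. The paper itself does not prove this lemma: it is quoted from \cite[Theorem~4.1]{Y22} and stated without proof, so there is no in-paper argument to compare against. Your direct check of closure, identity, and inverses (using surjectivity of $\varphi$ to pin down $\pi_f$ uniquely and obtain $\pi_{f\circ g}=\pi_f\circ\pi_g$, $\pi_{f^{-1}}=\pi_f^{-1}$) is the standard route and yields, as you note, the homomorphism $f\mapsto\pi_f$ into $\mathrm{Sym}(S)$ that the paper later exploits for the wreath-product description.
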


 \begin{lem}{\rm \cite[Theorem 4.2]{Y22}}\label{Gi} Let the notations be as in Lemma \ref{Gp}, and let
$$G(\varphi, 1)=\{f\in G_\varphi, \pi_f=1\}.$$Then $G(\varphi, 1)$ is a normal subgroup of $G_\varphi$.\end{lem}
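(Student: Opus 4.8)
The plan is to exhibit $G(\varphi,1)$ as the kernel of a natural group homomorphism, since the kernel of a homomorphism is automatically normal. First I would record that, because $\varphi$ is surjective, the bijection $\pi_f$ attached to a given $f\in G_\varphi$ is \emph{uniquely} determined by $f$: if $s=\varphi(a)$, then the relation $\pi_f\circ\varphi=\varphi\circ f$ forces $\pi_f(s)=\varphi(f(a))$, and membership of $f$ in $G_\varphi$ guarantees this value does not depend on the chosen preimage $a$ of $s$. Consequently the assignment $\Phi\colon G_\varphi\to\mathrm{Sym}(S)$, $f\mapsto\pi_f$, is a well-defined map into the symmetric group on $S$.

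Next I would verify that $\Phi$ is a group homomorphism. For $f,g\in G_\varphi$ one has the diagram chase
$$\pi_f\circ\pi_g\circ\varphi=\pi_f\circ\varphi\circ g=\varphi\circ f\circ g,$$
so by surjectivity of $\varphi$ the (unique) bijection associated with $f\circ g$ is exactly $\pi_f\circ\pi_g$; that is, $\Phi(f\circ g)=\Phi(f)\circ\Phi(g)$. Specializing to $f=g=I$ gives $\pi_I=1$, and to $g=f^{-1}$ gives $\pi_{f^{-1}}=\pi_f^{-1}$, all consistent with Lemma~\ref{Gp}, which already provides that $G_\varphi$ is a group.

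Finally, by the very definition of $G(\varphi,1)$ we have
$$G(\varphi,1)=\{f\in G_\varphi:\pi_f=1\}=\ker\Phi,$$
and the kernel of a group homomorphism is a normal subgroup, which is the assertion. If one prefers a self-contained argument avoiding $\mathrm{Sym}(S)$, the same identities $\pi_{f\circ g}=\pi_f\circ\pi_g$ and $\pi_{f^{-1}}=\pi_f^{-1}$ show directly that $G(\varphi,1)$ is closed under composition and inversion, and that for $f\in G(\varphi,1)$, $g\in G_\varphi$ one has $\pi_{g\circ f\circ g^{-1}}=\pi_g\circ 1\circ\pi_g^{-1}=1$, hence $g\circ f\circ g^{-1}\in G(\varphi,1)$.

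I do not expect a genuine obstacle here: the only point requiring care is the uniqueness of $\pi_f$, which rests entirely on the surjectivity of $\varphi$, and once that is established the homomorphism property is a one-line computation. The result is essentially the standard fact that a short exact sequence is lurking behind the commuting-diagram setup of Lemma~\ref{Gp}.
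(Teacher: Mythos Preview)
Your argument is correct: exhibiting $G(\varphi,1)$ as the kernel of the homomorphism $\Phi\colon G_\varphi\to\mathrm{Sym}(S)$, $f\mapsto\pi_f$, is the natural approach, and your verification that $\pi_f$ is uniquely determined (by surjectivity of $\varphi$) and that $\pi_{f\circ g}=\pi_f\circ\pi_g$ is clean and complete. Note, however, that the present paper does not supply its own proof of this lemma at all---it is quoted from \cite[Theorem 4.2]{Y22}---so there is no in-paper argument to compare against; your write-up would serve perfectly well as a self-contained proof here.
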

%
 %\begin{proof} Obviously, $G_\varphi$ is a finite set of bijections, so it suffices to show that the composite of two bijections in $G_\varphi$ belongs to $G_\varphi$. Let $f, g\in G_\varphi$. Then we have the following two commutative diagrams
% $$\xymatrix{
%  A \ar[d]_{\varphi} \ar[r]^{f}
%                & A \ar[d]^{\varphi}  \\
 % S  \ar[r]_{\pi_f}
  %              & S          }\quad \xymatrix{
%  A \ar[d]_{\varphi} \ar[r]^{g}
 %               & A \ar[d]^{\varphi}  \\
  %S  \ar[r]_{\pi_g}
   %             & S          }
%$$
 %Hence $\pi_f\circ\varphi=\varphi\circ f, \, \pi_g\circ\varphi=\varphi\circ g$. It follows that

 %$$(\pi_g\circ\pi_f)\circ\varphi=\pi_g\circ(\pi_f\circ\varphi)=\pi_g\circ(\varphi\circ f)=(\pi_g\circ\varphi)\circ f=(\varphi\circ g)\circ f=\varphi\circ(g\circ f).$$
%That is, the following diagram is commutative
%$$ \quad\xymatrix{
%  A \ar[d]_{\varphi} \ar[r]^{f} & A \ar[d]_{\varphi} \ar[r]^{g} & A \ar[d]^{\varphi} \\
%  S \ar[r]^{\pi_f} & S \ar[r]^{\pi_g} & S   }$$
%and $f\circ g\in G_\varphi$. We are done. \end{proof}

If $A$ is a group and $\varphi$ is an epimorphism, then we have the following theorem.

\begin{thm}Let $A$ be a finite group of order $n$. Let $S\subseteq A$ be a subgroup with $|S|=d$ and $\varphi: A\rightarrow S$ be an epimorphism.  Let $G_\varphi$ be the set of all bijections $f: A\rightarrow A$ such that $\pi_f\circ \varphi=\varphi\circ f$, where $\pi_f:S\rightarrow S$ is a bijection. Then $G_\varphi=S_{n/d}\wr S_d$ and
$$\sharp G_\varphi=\left(\left(\frac{n}{d}\right)!\right)^dd!.$$\end{thm}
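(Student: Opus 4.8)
The plan is to build an explicit isomorphism between $G_\varphi$ and the wreath product $S_{n/d} \wr S_d$ by analyzing how an element $f \in G_\varphi$ acts on the fibers of $\varphi$. First I would observe that since $\varphi: A \to S$ is an epimorphism onto a subgroup of order $d$, its fibers $\varphi^{-1}(s)$ for $s \in S$ partition $A$ into $d$ blocks, each of size $n/d$ (the cosets of $\ker\varphi$). For $f \in G_\varphi$, the defining relation $\pi_f \circ \varphi = \varphi \circ f$ says precisely that $f$ maps the fiber $\varphi^{-1}(s)$ onto the fiber $\varphi^{-1}(\pi_f(s))$; since $f$ is a bijection, $\pi_f$ is forced to be a permutation of the $d$-element set $S$, and $f$ restricts to a bijection between the corresponding blocks.

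Next I would make the wreath product structure explicit. Fix once and for all a set of coset representatives so that each fiber is identified with the fixed $(n/d)$-element set $T = \ker\varphi$ (or simply with $\{1,\dots,n/d\}$); equivalently, choose bijections $\iota_s : \varphi^{-1}(s) \to T$ for each $s \in S$. Given $f \in G_\varphi$ with associated permutation $\sigma = \pi_f \in S_d \cong \mathrm{Sym}(S)$, the restriction $f|_{\varphi^{-1}(s)} : \varphi^{-1}(s) \to \varphi^{-1}(\sigma s)$ transports, via the $\iota$'s, to a permutation $\tau_s := \iota_{\sigma s} \circ f|_{\varphi^{-1}(s)} \circ \iota_s^{-1} \in S_{n/d}$. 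This assigns to $f$ the data $\big((\tau_s)_{s\in S},\, \sigma\big) \in \big(\prod_{s\in S} S_{n/d}\big) \rtimes S_d$, which is exactly the underlying set of $S_{n/d}\wr S_d$ with $S$ as the $S_d$-set $\Omega$. I would then check that composition in $G_\varphi$ matches the wreath product multiplication: if $f$ has data $((\tau_s),\sigma)$ and $g$ has data $((\rho_s),\mu)$, then $g \circ f$ sends $\varphi^{-1}(s) \to \varphi^{-1}(\mu\sigma s)$, and chasing the restrictions through the identifications gives the pair $\big((\rho_{\sigma s}\circ \tau_s)_{s}, \mu\sigma\big)$, which is the standard semidirect-product rule once one matches conventions. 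Surjectivity of this map is clear — any choice of $\sigma$ and of block-bijections assembles to an element of $G_\varphi$ — and injectivity is immediate since $f$ is determined by its restrictions to the fibers. The order count $\sharp G_\varphi = ((n/d)!)^d\, d!$ then follows from the displayed order formula for wreath products in Section 2.

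The main obstacle I anticipate is purely bookkeeping: getting the action of $S_d$ on the base group to come out with the correct variance, i.e.\ verifying that $Q = S_d$ acts on $K = \prod_{s\in S} S_{n/d}$ by permuting coordinates in the way prescribed by the definition of $D\wr Q$ in Section 2 ($q\cdot(d_\omega) = (d_{q\omega})$). This is sensitive to whether one composes $f$ then $g$ or $g$ then $f$ and to whether $\pi_{g\circ f} = \pi_g \circ \pi_f$, so I would fix the composition convention at the outset (matching $\pi_f \circ \varphi = \varphi \circ f$, which gives $\pi_{g\circ f} = \pi_g\circ\pi_f$) and carry it consistently. A secondary point worth a sentence is that the statement $S_{n/d}\wr S_d$ does not depend on which subgroup $S$ or which epimorphism $\varphi$ is chosen, only on the fiber size $n/d$ and the number of fibers $d$ — this is transparent from the construction, since only the partition of $A$ into $d$ blocks of size $n/d$ is used. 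Everything else — that $G_\varphi$ is a group, that $G(\varphi,1) = \ker(f\mapsto\pi_f)$ is the normal base subgroup — is already supplied by Lemma \ref{Gp} and Lemma \ref{Gi}.
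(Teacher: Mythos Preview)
Your proposal is correct and follows essentially the same approach as the paper: decompose each $f\in G_\varphi$ into the permutation $\pi_f$ of the $d$ fibers together with the block bijections $f|_{\varphi^{-1}(s)}$, and verify that composition in $G_\varphi$ matches the wreath-product multiplication rule $((g_i),\pi_g)\circ((f_i),\pi_f)=((g_{\pi_f(i)}\circ f_i),\pi_g\circ\pi_f)$. Your explicit use of the identifications $\iota_s:\varphi^{-1}(s)\to T$ to land the block bijections in a single copy of $S_{n/d}$ is slightly more precise than the paper's treatment, but the argument is otherwise the same.
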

\begin{proof} Since $A$ is a finite group and $\varphi: A\rightarrow S$ is an epimorphism, we denote
$$S=\{s_1, \ldots, s_d\}, \quad \pi_f: s_i\to s_{\pi_f(i)},$$
then $\{\pi_f(1), \ldots, \pi_f(d)\}$ is a permutation of $\{1, 2, \ldots, d\}$.
Now $|\varphi^{-1}(s_i)|=n/d$, $i=1, 2, \ldots, d$ is a constant for all $i$ as $\varphi$ is an epimorphism. Put
$$\varphi^{-1}(s_i)=C_i,\,\,\, i=1, 2, \ldots, d.$$
Since $\pi_f\circ\varphi=\varphi\circ f$, it follows that
$$f(C_i)=C_{\pi_f(i)}, \,\,\, i=1, 2, \ldots, d,$$
and $f|_{C_i}: C_i\rightarrow C_{\pi_f(i)}$ is a bijection from $C_i$ to $C_{\pi_f(i)}$, and we denote this local map by $f_i, i=1, 2, \ldots, d$. Now we can write the bijection $f$ as
$$f=(f_i)_{i=1}^d=(f_i) \, \mbox{ for short},$$
and
$$G_\varphi=\{(f, \pi_f)=((f_i), \pi_f)\}.$$
Then the product of two elements in $G_\varphi$ can be expressed as
$$((g_i), \pi_g))\circ((f_i), \pi_f))=((g_{\pi_f(i)}\circ f_i), \pi_g\circ\pi_f),$$
where $g_{\pi_f(i)}: C_{\pi_f(i)}\rightarrow C_{\pi_{(g\circ f)}(i)}$ and $\pi_{(g\circ f)}(i)=\pi_g(\pi_f(i))$.

Let $D=S_{n/d}$ and $D=S_d$ be symmetric groups. $\Omega=\{1, 2,\ldots, d\}$,
$$K=\prod_{\omega=1}^dD_\omega, \quad D_\omega\cong D_{n/d}, \omega\in\{1, 2,\ldots, d\}.$$
For any $\pi\in S_d$ and $(f_\omega)\in \prod_{\omega=1}^dD_\omega$,
$$\pi((f_\omega))=(f_{\pi(\omega)}).$$
By the definition of wreath product, we have that $G_\varphi$ is the wreath product of $S_{n/d}$ and $S_d$, and
$$\sharp G_\varphi=\left(\left(\frac{n}{d}\right)!\right)^dd!.$$ The base group is
$$K=\{((f_\omega),1), (f_\omega)\in \prod_{\omega=1}^dD_\omega\},$$
which is a normal subgroup of $G_\varphi$ by Lemma \ref{Gi}. This proves the theorem.
\end{proof}

\section{Compositional Inverses of PPs}
We first prove the following general theorem.

\begin{thm} \label{Gv}Let $s, t$ be  positive integers. Let $A, S_i, T_j, i=1, \ldots, s,  j=1, \ldots, t$ be non-empty finite sets, and let $\varphi_i: A \rightarrow S_i, 1\le i\le s$, $\psi_j: A \rightarrow T_j, 1\le j\le t$, $h: S_1\times\cdots\times S_s\rightarrow T_1\times\cdots\times T_t$ be maps such that the maps $ \varphi: A\rightarrow S_1\times\cdots\times S_s$, $\varphi(a)=(\varphi_1(a), \ldots, \varphi_s(a)), a\in A$ and $\psi: A\rightarrow T_1\times\cdots\times T_t$, $\psi(a)=(\psi_1(a), \ldots, \psi_t(a)), a\in A$ are injective and the following diagram commutes
 $$\xymatrix{
  A \ar[d]_{(\varphi_1, \ldots, \varphi_s)} \ar[r]^{f}
                & A \ar[d]^{(\psi_1, \ldots, \psi_t)}  \\
  S_1\times\cdots\times S_s  \ar[r]_{h}
                &   T_1\times\cdots\times T_t        }$$
Then $f$ is a bijection if and only if $h|_{\varphi(A)}: \varphi(A)\rightarrow T_1\times\cdots\times T_t$ is an injective. Further, if $f$ is a bijection,  $\varphi^{-1}:S_1\times\cdots\times S_s\rightarrow A$ and $(h|_{\varphi(A)})^{-1}: T_1\times\cdots\times T_t\rightarrow \varphi(A)$ are left inverses of $\varphi$ and $h|_{\varphi(A)}$, respectively, then $f^{-1}=\varphi^{-1}\circ (h|_{\varphi(A)})^{-1}\circ \psi$.
\end{thm}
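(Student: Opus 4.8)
The plan is to exploit the commutativity of the diagram together with the injectivity of the two "coordinate" maps $\varphi$ and $\psi$, reducing the statement about $f$ to a statement about $h$ restricted to the image $\varphi(A)$. First I would record the single identity that drives everything: writing $\varphi=(\varphi_1,\dots,\varphi_s)$ and $\psi=(\psi_1,\dots,\psi_t)$, the hypothesis says precisely that $\psi\circ f = h\circ\varphi$ as maps $A\to T_1\times\cdots\times T_t$. Since $A$ is finite, $f$ is a bijection if and only if $f$ is injective, so the whole theorem is really about injectivity.

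For the equivalence, I would argue both directions from the factorization $\psi\circ f = h\circ\varphi$. If $f$ is injective, then $\psi\circ f$ is injective (as $\psi$ is injective), hence $h\circ\varphi$ is injective; since $\varphi$ is onto its image $\varphi(A)$, this forces $h|_{\varphi(A)}$ to be injective — indeed if $h(\varphi(a))=h(\varphi(a'))$ then $\psi(f(a))=\psi(f(a'))$, so $f(a)=f(a')$, so $a=a'$, so $\varphi(a)=\varphi(a')$. Conversely, suppose $h|_{\varphi(A)}$ is injective; if $f(a)=f(a')$ then applying $\psi$ gives $h(\varphi(a))=\psi(f(a))=\psi(f(a'))=h(\varphi(a'))$, and since $\varphi(a),\varphi(a')\in\varphi(A)$ the injectivity of $h|_{\varphi(A)}$ yields $\varphi(a)=\varphi(a')$, whence $a=a'$ because $\varphi$ is injective. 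Thus $f$ is injective, hence bijective. (It is worth noting in passing that injectivity of $\varphi$ already forces $\sharp A\le \sharp(S_1\times\cdots\times S_s)$ and similarly for $\psi$, and that when $f$ is a bijection $h|_{\varphi(A)}$ is actually a bijection onto $\psi(A)=\psi(f(A))$, which is what makes the left inverses in the last claim meaningful.)

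For the formula $f^{-1}=\varphi^{-1}\circ (h|_{\varphi(A)})^{-1}\circ\psi$, I would verify it by composing with $f$ and using the left-inverse properties one layer at a time. Start from $\psi\circ f=h\circ\varphi$. Since $(h|_{\varphi(A)})^{-1}$ is a left inverse of $h|_{\varphi(A)}$ and $\varphi(a)\in\varphi(A)$ for every $a$, we get $(h|_{\varphi(A)})^{-1}\circ\psi\circ f = (h|_{\varphi(A)})^{-1}\circ h\circ\varphi = \varphi$ on $A$. Then applying the left inverse $\varphi^{-1}$ of $\varphi$ gives $\varphi^{-1}\circ(h|_{\varphi(A)})^{-1}\circ\psi\circ f = \varphi^{-1}\circ\varphi = I_A$. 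Since $f$ is a bijection, a one-sided inverse is a two-sided inverse, so this identifies $\varphi^{-1}\circ(h|_{\varphi(A)})^{-1}\circ\psi$ with $f^{-1}$.

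The only genuinely delicate point — and the step I would be most careful about — is domain bookkeeping in the final computation: one must check that every intermediate composition makes sense, i.e. that the argument fed to $(h|_{\varphi(A)})^{-1}$ genuinely lies in $T_1\times\cdots\times T_t$ (it does, since that is the stated domain of that left inverse), and more importantly that the left-inverse cancellations $(h|_{\varphi(A)})^{-1}\circ h\circ\varphi=\varphi$ and $\varphi^{-1}\circ\varphi=I_A$ are applied only to elements of $\varphi(A)$ and of $\varphi(A)$-images respectively, where the left-inverse property is guaranteed. Once one observes that $\varphi$ takes values in $\varphi(A)$ by definition, there is no obstruction, and the rest is a routine chase. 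I would also remark that the formula does not depend on which left inverses are chosen, since $f^{-1}$ is unique.
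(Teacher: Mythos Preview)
Your proof is correct and follows essentially the same approach as the paper: both exploit the injectivity of $\varphi$ and $\psi$ to reduce the bijectivity of $f$ to the injectivity of $h|_{\varphi(A)}$. The paper simply packages this as an application of the AGW criterion (after restricting $\varphi$ and $\psi$ to bijections onto their images, the fibers are singletons and the injectivity-on-fibers condition becomes vacuous), whereas you carry out the equivalent elementary injectivity chase by hand and make the left-inverse computation for $f^{-1}$ explicit where the paper just says ``obvious.''
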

\begin{proof} Put $S=\{(\varphi_1(a), \ldots, \varphi_s(a)), a\in A\}$, $T=\{(\psi_1(a), \ldots, \psi_t(a)), a\in A\}$ and
$$g: S\rightarrow T,\quad g((\varphi_1(a), \ldots, \varphi_s(a)))=
(\psi_1(a), \ldots, \psi_t(a)),\,\,\, a\in A.$$ Then we have the following commutative diagram
 $$\xymatrix{
  A \ar[d]_{\varphi} \ar[r]^{f}
                & A \ar[d]^{\psi}  \\
  S  \ar[r]_{g}
                & T          }$$
Hence the results follows immediately from AGW criterion with bijections $\varphi$ and $\psi$, and the definitions of left inverses. The last assertion is obvious.
\end{proof}

As a consequence of the above result ($s=t=2$), we obtain Theorem 2 in \cite{NLQW21}.
\begin{coro}{\rm (\cite[Theorem 2]{NLQW21})} Let $A$ be a finite set, $f: A\rightarrow A$, and let $\phi=(\lambda, \eta)$ and $\bar{\phi}=( \bar{\lambda}, \bar{\eta})$ be two bijective mappings from $A$ to some subsets of $A\times A$, and denote by $\phi^{-1}, \bar{\phi}^{-1}$ their compositional inverses respectively. Let $\psi=(g, \tau): \phi(A)\rightarrow \bar{\phi}(A)$ be a mapping such that $\bar{\phi}\circ f=\psi\circ \phi$. Then $f$ is a bijective if and only if $\psi$ is bijective. Furthermore, if $\psi$ is bijective and its compositional inverse is denoted by $\psi^{-1}$, then
$$f^{-1}=\phi^{-1}\circ \psi^{-1}\circ \bar{\phi}.$$\end{coro}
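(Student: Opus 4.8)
The plan is to derive this corollary as the special case $s=t=2$ of Theorem~\ref{Gv}, so the main work is a bookkeeping translation between the two statements' notation. First I would set $S_1=S_2=T_1=T_2=A$, so that $S_1\times S_2=A\times A=T_1\times T_2$. The bijection $\phi=(\lambda,\eta):A\to A\times A$ plays the role of $\varphi=(\varphi_1,\varphi_2)$ with $\varphi_1=\lambda$, $\varphi_2=\eta$; likewise $\bar\phi=(\bar\lambda,\bar\eta)$ plays the role of $\psi=(\psi_1,\psi_2)$ with $\psi_1=\bar\lambda$, $\psi_2=\bar\eta$. The map $\psi=(g,\tau):\phi(A)\to\bar\phi(A)$ of the corollary plays the role of $h$ restricted to $\varphi(A)$; note that in Theorem~\ref{Gv} one only ever uses $h$ on $\varphi(A)$, so it is harmless that the corollary's $\psi$ is defined only on $\phi(A)=\varphi(A)$ rather than on all of $A\times A$ (one can extend it arbitrarily to $A\times A$ if a totally defined $h$ is formally required). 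Under this dictionary the commuting square $\bar\phi\circ f=\psi\circ\phi$ of the corollary is exactly the commuting diagram of Theorem~\ref{Gv}, and the injectivity hypotheses on $\varphi$ and $\psi$ there become the hypotheses that $\phi$ and $\bar\phi$ are injective, which holds since they are bijective onto their images.

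Next I would invoke the equivalence in Theorem~\ref{Gv}: $f$ is a bijection if and only if $h|_{\varphi(A)}$ is injective. In the corollary's notation $h|_{\varphi(A)}$ is precisely the map $\psi:\phi(A)\to\bar\phi(A)$. Since $\phi(A)$ and $\bar\phi(A)$ are finite sets, I must check they have the same cardinality so that "injective'' and "bijective'' coincide; this follows because $|\phi(A)|=|A|=|\bar\phi(A)|$ as $\phi$ and $\bar\phi$ are bijections from $A$. Hence "$h|_{\varphi(A)}$ is injective'' is equivalent to "$\psi$ is bijective,'' giving the first assertion of the corollary.

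For the inverse formula, Theorem~\ref{Gv} gives $f^{-1}=\varphi^{-1}\circ(h|_{\varphi(A)})^{-1}\circ\psi$ where $\varphi^{-1}$ and $(h|_{\varphi(A)})^{-1}$ are \emph{left} inverses. In the corollary's situation $\phi$, $\bar\phi$, and the map $\psi$ (once it is known bijective) are honest bijections, so their left inverses are their genuine two-sided compositional inverses $\phi^{-1}$, $\psi^{-1}$, and $\bar\phi^{-1}$ (the last playing the role of the corollary's map named $\bar\phi$ in the final composition—more precisely, the outermost map in the formula is the corollary's $\bar\phi$, since in Theorem~\ref{Gv} the outermost factor is $\psi$ and our $\psi$ is the corollary's $\bar\phi$). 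Substituting, $f^{-1}=\phi^{-1}\circ\psi^{-1}\circ\bar\phi$, which is exactly the claimed formula. I would close by remarking that no step is genuinely hard; the only point requiring care—and the place where a careless reader could slip—is keeping the two overloaded uses of the symbol $\psi$ straight (the $\psi$ of Theorem~\ref{Gv} versus the $\psi$ of the corollary) and correctly matching the corollary's $\psi=(g,\tau)$ to the $h$ of the theorem rather than to its $\psi$. So the main obstacle is purely notational disambiguation, not mathematical content.
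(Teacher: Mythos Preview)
Your proposal is correct and follows exactly the approach the paper takes: the paper simply states that this corollary is the special case $s=t=2$ of Theorem~\ref{Gv}, and you have carefully spelled out the notational translation that makes this work. Your care in distinguishing the two uses of the symbol $\psi$ and in noting that $|\phi(A)|=|A|=|\bar\phi(A)|$ so that injectivity and bijectivity coincide is appropriate, though the paper leaves all of this implicit.
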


We also have the following special case of Theorem \ref{Gv}.
\begin{coro}Let $ t$ be a positive integer. Let $A, S_i, T_i,  i=1, \ldots,  t$ be non-empty finite sets, and let $\varphi_i: A \rightarrow S_i$, $\psi_i: A \rightarrow T_i, 1\le i\le t$, $h: S_1\times\cdots\times S_t\rightarrow S_1\times\cdots\times S_t$ be maps such that the maps $ \varphi: A\rightarrow S_1\times\cdots\times S_t$, $\varphi(a)=(\varphi_1(a), \ldots, \varphi_t(a)), a\in A$ and $\psi: A\rightarrow S_1\times\cdots\times S_t$, $\psi(a)=(\psi_1(a), \ldots, \psi_t(a)), a\in A$ are injective and the following diagram commutes
 $$\xymatrix{
  A \ar[d]_{(\varphi_1, \ldots, \varphi_t)} \ar[r]^{f}
                & A \ar[d]^{(\psi_1, \ldots, \psi_t)}  \\
  S_1\times\cdots\times S_t  \ar[r]_{h}
                &   S_1\times\cdots\times S_t        }$$
Then $f$ is a bijection if and only if $h|_{\varphi(A)}: \varphi(A)\rightarrow S_1\times\cdots\times S_t$ is an injective. Further, if $f$ is a bijection,  $\varphi^{-1}:S_1\times\cdots\times S_t\rightarrow A$ and $(h|_{\varphi(A)})^{-1}: S_1\times\cdots\times S_t\rightarrow \varphi(A)$ are left inverses of $\varphi$ and $h|_{\varphi(A)}$, respectively, then $f^{-1}=\varphi^{-1}\circ (h|_{\varphi(A)})^{-1}\circ \psi$.\end{coro}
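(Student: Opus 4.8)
The plan is to recognize that this corollary is nothing but the special case of Theorem \ref{Gv} obtained by taking $s = t$ and identifying $T_j$ with $S_j$ for each $j = 1, \ldots, t$; so the whole argument reduces to checking that, under this identification, every hypothesis of the corollary coincides with a hypothesis of Theorem \ref{Gv} and that the conclusion transports verbatim.

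First I would set, in the notation of Theorem \ref{Gv}, $s := t$ and $T_j := S_j$ for $1 \le j \le t$. Then $T_1 \times \cdots \times T_t = S_1 \times \cdots \times S_t$, so the map $h : S_1 \times \cdots \times S_t \to S_1 \times \cdots \times S_t$ appearing in the corollary is exactly a map $h : S_1 \times \cdots \times S_s \to T_1 \times \cdots \times T_t$ in the sense of Theorem \ref{Gv}. Likewise, each $\psi_i : A \to T_i$ is of the required form, the product map $\psi : A \to S_1 \times \cdots \times S_t$ is injective precisely as demanded, $\varphi : A \to S_1 \times \cdots \times S_t$ is injective, and the commuting square in the corollary is literally the commuting square of Theorem \ref{Gv} after this identification.

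Next I would invoke Theorem \ref{Gv} directly: it yields that $f$ is a bijection if and only if $h|_{\varphi(A)} : \varphi(A) \to T_1 \times \cdots \times T_t = S_1 \times \cdots \times S_t$ is injective, which is exactly the first assertion of the corollary; and that, when $f$ is a bijection, for any left inverses $\varphi^{-1}$ of $\varphi$ and $(h|_{\varphi(A)})^{-1}$ of $h|_{\varphi(A)}$ one has $f^{-1} = \varphi^{-1} \circ (h|_{\varphi(A)})^{-1} \circ \psi$, which is precisely the formula claimed. Nothing further is required.

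There is essentially no obstacle here; the only point deserving care is the bookkeeping of which factors get identified, together with the observation that the injectivity hypotheses on $\varphi$ and $\psi$ in the corollary are the very same hypotheses used in Theorem \ref{Gv}, so that no extra argument (for instance comparing $|\varphi(A)|$ with $|\psi(A)|$) is needed, since Theorem \ref{Gv} already covers the non-square situation. Hence the corollary follows at once.
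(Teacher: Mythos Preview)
Your proposal is correct and matches the paper's own treatment exactly: the paper simply introduces this corollary as ``the following special case of Theorem \ref{Gv}'' with no separate proof, and your argument is precisely the identification $s=t$, $T_j=S_j$ that makes this explicit. Nothing more is needed.
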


\begin{coro}Let $ t$ be a positive integer. Let $A, S_i,  i=1, \ldots,  t$ be non-empty finite sets, and let $\varphi_i: A \rightarrow S_i$, $\psi_i: A \rightarrow S_i, 1\le i\le t$ be surjective, $h_i: S_i\rightarrow S_i, 1\le i\le t$ be maps such that the maps $ \varphi: A\rightarrow S_1\times\cdots\times S_t$, $\varphi(a)=(\varphi_1(a), \ldots, \varphi_t(a)), a\in A$ and $\psi: A\rightarrow S_1\times\cdots\times S_t$, $\psi(a)=(\psi_1(a), \ldots, \psi_t(a)), a\in A$ are injective, $\psi_i\circ f=h\circ \varphi_i, 1\le i\le t$ and the following diagram commutes
 $$\xymatrix{
  A \ar[d]_{(\varphi_1, \ldots, \varphi_t)} \ar[r]^{f}
                & A \ar[d]^{(\psi_1, \ldots, \psi_t)}  \\
  S_1\times\cdots\times S_t  \ar[r]_{(h_1, \ldots, h_t)}
                &   S_1\times\cdots\times S_t        }$$
Then $f$ is a bijection if and only if $h_i, 1\le i\le t$ are injective. Furthermore, if $f$ is a bijection,  $\varphi^{-1}: S_1\times\cdots\times S_t\rightarrow A$ is the  left inverse of $\varphi$, and $h^{-1}_i, 1\le i\le t$ are compositional inverses of $h_i, 1\le i\le t$ respectively, then $f^{-1}=\varphi^{-1}\circ (h^{-1}_1, \ldots, h^{-1}_t )\circ \psi$.\end{coro}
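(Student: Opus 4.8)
The plan is to obtain this corollary as an immediate specialization of Theorem \ref{Gv} (equivalently of the previous corollary), taking $s=t$, $T_i=S_i$, and the single map $h\colon S_1\times\cdots\times S_t\to S_1\times\cdots\times S_t$ to be the product map $h=(h_1,\dots,h_t)$, $h(u_1,\dots,u_t)=(h_1(u_1),\dots,h_t(u_t))$. The hypotheses $\psi_i\circ f=h_i\circ\varphi_i$ for $1\le i\le t$ are exactly the componentwise form of $\psi\circ f=h\circ\varphi$, so the commutative square in the statement is literally the one in Theorem \ref{Gv}, and the theorem applies verbatim. What remains is to rephrase its conclusion in terms of the individual $h_i$: first, to show that $h|_{\varphi(A)}$ is injective if and only if every $h_i$ is injective; second, to check that $(h_1^{-1},\dots,h_t^{-1})$ is an admissible choice of left inverse $(h|_{\varphi(A)})^{-1}$, so that the formula of Theorem \ref{Gv} collapses to the asserted one.

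For the equivalence, one direction is trivial: if each $h_i$ is injective then the product map $h=(h_1,\dots,h_t)$ is injective on all of $S_1\times\cdots\times S_t$, hence a fortiori on $\varphi(A)$, and Theorem \ref{Gv} gives that $f$ is a bijection. For the converse I would argue directly rather than through $h|_{\varphi(A)}$, since injectivity of $h$ on the (possibly small) subset $\varphi(A)$ does not by itself force the components to be injective. Assume $f$ is a bijection. Fix $i$ and take images on both sides of $\psi_i\circ f=h_i\circ\varphi_i$: the left side has image $\psi_i(f(A))=\psi_i(A)=S_i$ because $f$ is onto and $\psi_i$ is surjective, while the right side has image $h_i(\varphi_i(A))=h_i(S_i)$ because $\varphi_i$ is surjective. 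Hence $h_i(S_i)=S_i$, i.e. $h_i$ is a surjection of the finite set $S_i$ onto itself, so $h_i$ is a bijection and in particular injective. This is the one place where the surjectivity of the $\varphi_i$ and $\psi_i$ and the finiteness of the $S_i$ are genuinely used.

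For the inverse formula, assume $f$ is a bijection, so by the previous step each $h_i$ is bijective and $h=(h_1,\dots,h_t)$ is a bijection of $S_1\times\cdots\times S_t$ with $h^{-1}=(h_1^{-1},\dots,h_t^{-1})$. From the commuting square $\psi\circ f=h\circ\varphi$ and $f(A)=A$ we get $\psi(A)=h(\varphi(A))$, so $h^{-1}$ carries $\psi(A)$ onto $\varphi(A)$. Thus the map $(h_1^{-1},\dots,h_t^{-1})$, modified arbitrarily off $\psi(A)$ so as to take values in $\varphi(A)$, is a left inverse of $h|_{\varphi(A)}$ in the sense of Theorem \ref{Gv}. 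Substituting it, together with any left inverse $\varphi^{-1}$ of $\varphi$, into $f^{-1}=\varphi^{-1}\circ(h|_{\varphi(A)})^{-1}\circ\psi$ yields $f^{-1}=\varphi^{-1}\circ(h_1^{-1},\dots,h_t^{-1})\circ\psi$, because the argument fed to $(h|_{\varphi(A)})^{-1}$ always lies in $\psi(A)$, where the modification is irrelevant.

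The only real obstacle is the ``only if'' direction of the equivalence: extracting injectivity (indeed bijectivity) of each separate $h_i$ from bijectivity of $f$. A naive appeal to Theorem \ref{Gv} only delivers injectivity of $h$ restricted to $\varphi(A)$, which is strictly weaker; the image chase above, powered by the surjectivity of the component maps and a pigeonhole step on the finite sets $S_i$, is what bridges the gap. Everything else — identifying the two commutative squares, the product-map bookkeeping, and the handling of left inverses — is routine.
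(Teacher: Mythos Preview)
Your proposal is correct and follows the paper's own approach: the paper simply presents this corollary as a special case of Theorem~\ref{Gv} (with $s=t$, $T_i=S_i$, and $h=(h_1,\dots,h_t)$) and gives no further argument. Your treatment is in fact more careful than the paper's, since you explicitly justify the ``only if'' direction---that bijectivity of $f$ forces each $h_i$ to be injective---via the image chase using surjectivity of $\varphi_i,\psi_i$ and finiteness of $S_i$, a point the paper leaves implicit.
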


In fact, Corollary 5.3 provides a possibility to solve the compositional inverses of all PPs and the process can be written as follows.

(1) Find enough possible surjections $\varphi_i, \psi_i$ and $h_i, 1\le i\le t$ such that the following diagrams commute
$$\xymatrix{
  A \ar[d]_{\varphi_i} \ar[r]^{f}
                & A \ar[d]^{\psi_i}  \\
  S_i  \ar[r]_{h_i}
                &   S_i        }$$

(2) Compute the compositional inverses $h_i^{-1}$ of $h_i, 1\le i\le t$.

(3) If we have $x=F(\varphi_1(x), \ldots, \varphi_t(x))$ for some polynomial $F(x_1, \ldots, x_t)\in A[x_1, \ldots, x_t]$, then we obtain  the compositional inverses of $f$, and we have
$$f^{-1}(x)=F\left(h_1^{-1}(\psi_1(x)), \ldots, h_t^{-1}(\psi_t(x))\right).$$

\section{An application}

As an application the results in Section 5, we will present some new PPs and their compositional inverses in this section.

Let $d>1$ be a positive integer, $q$ a prime power with $q\equiv1\pmod{d}$ and $\omega$ a  $d$-th primitive root of unity over $\mathbb{F}_q$. For $0\le i\le d-1$, let
$$A_i(x)=x^{q^{d-1}}+\omega^ix^{q^{d-2}}+\cdots+\omega^{i(d-1)}x.$$
Then we have
\begin{lem} Let the notations be as above, then we have

(i) $A_i^q(x)=\omega^iA_i(x), 0\le i\le d-1$.

(ii) For any positive integer $m$ and integers $i, j$ with $0\le i, j\le d-1$, we have
$$A_j(x)\circ A_i^m(x)=\left\{ \begin{array}{ll}
                     dA_i^m(x), & \mbox{if } j\equiv im\pmod{d}, \\
                     0, & \mbox{ otherwise.}
                    \end{array}
         \right.$$
         \end{lem}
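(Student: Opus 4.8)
The plan is to read both parts as identities of reduced polynomials modulo $x^{q^{d}}-x$ (equivalently, as maps of $\mathbb{F}_{q^{d}}$), and to use the two facts forced by $q\equiv 1\pmod d$: $d\mid q-1$, so every $d$-th root of unity already lies in $\mathbb{F}_q^{*}$ — in particular $\omega^{q}=\omega$ — and $p\nmid d$, so $d\neq 0$ in $\mathbb{F}_q$. I would write $A_i(x)=\sum_{k=0}^{d-1}\omega^{ik}x^{q^{d-1-k}}$ throughout, and let $\sigma$ denote the Frobenius $z\mapsto z^{q}$, which acts as $\sigma^{d}=\mathrm{id}$.

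For (i): apply $\sigma$ to $A_i(x)$. Since the coefficients $\omega^{ik}$ lie in $\mathbb{F}_q$ and we are in characteristic $p$, $A_i^{q}(x)=\sum_{k=0}^{d-1}\omega^{ikq}x^{q^{d-k}}=\sum_{k=0}^{d-1}\omega^{ik}x^{q^{d-k}}$. I would then split off the $k=0$ term, use $x^{q^{d}}\equiv x$, reindex the remaining terms, and match them term by term against $\omega^{i}A_i(x)=\sum_{k=0}^{d-1}\omega^{i(k+1)}x^{q^{d-1-k}}$ after peeling off its $k=d-1$ term (whose coefficient $\omega^{id}=1$ gives back $x$). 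Both sides then coincide, which is (i). Equivalently and more conceptually, one checks $\sigma\circ A_i=\omega^{i}A_i$ as $\mathbb{F}_q$-linear endomorphisms of $\mathbb{F}_{q^{d}}$ by the same reindexing together with $\sigma^{d}=\mathrm{id}$; this operator form is what makes (ii) transparent.

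For (ii): first iterate (i). Since $\omega\in\mathbb{F}_q$, an immediate induction on $l$ gives $A_i(x)^{q^{l}}=\omega^{il}A_i(x)$ for all $l\ge 0$, hence
$$\bigl(A_i^{m}(x)\bigr)^{q^{l}}=\bigl(A_i(x)^{q^{l}}\bigr)^{m}=\omega^{ilm}A_i^{m}(x).$$
Substituting $A_i^{m}(x)$ for the variable in $A_j$ then yields
$$A_j\!\left(A_i^{m}(x)\right)=\sum_{l=0}^{d-1}\omega^{j(d-1-l)}\bigl(A_i^{m}(x)\bigr)^{q^{l}}=A_i^{m}(x)\sum_{l=0}^{d-1}\omega^{j(d-1-l)+ilm}.$$
Writing the exponent as $j(d-1)+l(im-j)$, the right-hand sum becomes $\omega^{j(d-1)}\sum_{l=0}^{d-1}\bigl(\omega^{im-j}\bigr)^{l}$. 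Since $\omega$ has exact order $d$, the geometric sum equals $d$ when $j\equiv im\pmod d$ and vanishes otherwise (there $\omega^{im-j}$ is a $d$-th root of unity $\neq 1$ and $(\omega^{im-j})^{d}=1$). This produces the claimed dichotomy; in the non-vanishing branch the scalar is $d\,\omega^{j(d-1)}=d\,\omega^{-j}$, which equals $d\,\omega^{-im}$ because there $j\equiv im$, so I would record that branch as $d\,\omega^{-im}A_i^{m}(x)$ (this coincides with the displayed constant $d\,A_i^{m}(x)$ precisely when $j=0$).

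I do not foresee a genuine obstacle here: the argument is essentially the bookkeeping of $\omega$-exponents through the substitution and the reduction $x^{q^{d}}\equiv x$, and the single structural input is part (i), which says that Frobenius scales $A_i(x)$ — hence scales every power $A_i^{m}(x)$ — thereby collapsing the double sum to one geometric sum that is controlled by $d$-th roots of unity. As a numerical sanity check on the constants I would run $d=2$, $q=3$, where $\omega=-1$ and $A_1(x)=x^{3}-x$ on $\mathbb{F}_{9}$.
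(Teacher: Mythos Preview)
Your proof is correct and follows essentially the same route as the paper: establish (i) by applying Frobenius and reindexing (using $\omega^{q}=\omega$ and $x^{q^{d}}\equiv x$), then for (ii) iterate (i) to obtain $(A_i^{m}(x))^{q^{l}}=\omega^{ilm}A_i^{m}(x)$ and collapse the substitution into $A_j$ to a single geometric sum over $d$-th roots of unity. Your observation that the non-vanishing branch actually carries the scalar $d\,\omega^{-j}$ rather than $d$ is correct and is exactly what the paper's own proof produces (and what is used downstream in the theorem that follows), so the displayed constant $d$ in the lemma's statement is a slip.
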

         \begin{proof} For (i), we have
$$\left(A_i(x)\right)^q=x+\omega^ix^{q^{d-1}}+\cdots+\omega^{i(d-1)}x^q=\omega^iA_i(x).$$
This proves (i). Now we prove (ii), repeating the procedure in (i), we get
$$\left(A_i(x)\right)^{q^t}=\omega^{it}A_i(x).$$
Hence
$$\left(A_i^m(x)\right)^{q^t}=\left(A_i(x)^{q^t}\right)^m=\omega^{imt}A_i^m(x).$$
Therefore
$$ A_j(x)\circ A_i^m(x)=A_i^m(x)\sum_{t=0}^{d-1}\omega^{(jt+i(d-1-t)m)}=A_i^m(x)\omega^{im_i(d-1)}\sum_{t=0}^{d-1}\omega^{(j-im)t}$$
$$=\left\{ \begin{array}{ll}
                     d\omega^{-j}A_i^m(x), & \mbox{if } j\equiv im\pmod{d}, \\
                     0, & \mbox{ otherwise.}
                    \end{array}
         \right.$$\end{proof}

\begin{lem} Let the notations be as in Lemma, and let
$$B_i=\{A_i(x), \, x\in\mathbb{F}_{q^d}\},\,\, 0\le i\le d-1.$$
Then $B_i=\{0\}\cup y_i\mathbb{F}_q^\ast$, where $y_i$ is a fixed nonzero element of $B_i$, $0\le i\le d-1$.\end{lem}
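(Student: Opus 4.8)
The plan is to combine the $\mathbb{F}_q$-linearity of $A_i(x)$ with part (i) of the preceding lemma. Since $d\mid q-1$, the $d$-th root of unity $\omega$ already lies in $\mathbb{F}_q$, so every coefficient $\omega^{it}$ of $A_i(x)=\sum_{t=0}^{d-1}\omega^{it}x^{q^{d-1-t}}$ lies in $\mathbb{F}_q$; as $A_i(x)$ is moreover a $q$-polynomial, it induces an $\mathbb{F}_q$-linear endomorphism of $\mathbb{F}_{q^d}$, and hence $B_i=A_i(\mathbb{F}_{q^d})$ is an $\mathbb{F}_q$-subspace of $\mathbb{F}_{q^d}$. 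Thus it suffices to prove $\dim_{\mathbb{F}_q}B_i=1$: then $B_i=\mathbb{F}_q\,y_i=\{0\}\cup y_i\mathbb{F}_q^{\ast}$ for any nonzero $y_i\in B_i$.

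For the upper bound $\dim_{\mathbb{F}_q}B_i\le 1$ I would invoke part (i) of the lemma: from $A_i(x)^q=\omega^i A_i(x)$ we get that every $y\in B_i$ satisfies $y^q=\omega^i y$, so $B_i\subseteq V_i:=\ker\!\big(y\mapsto y^q-\omega^i y\big)$. Since $y^q-\omega^i y$ has degree $q$, the $\mathbb{F}_q$-subspace $V_i$ has at most $q$ elements, i.e. $\dim_{\mathbb{F}_q}V_i\le 1$, whence $\dim_{\mathbb{F}_q}B_i\le 1$. The only remaining point — and the one requiring genuine (if brief) attention — is to rule out $B_i=\{0\}$. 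For this I would note that the polynomial $A_i(x)$ is nonzero (its leading term $x^{q^{d-1}}$ has coefficient $1$) and has degree $q^{d-1}<q^{d}$, so it cannot vanish on all of $\mathbb{F}_{q^d}$; hence $B_i\neq\{0\}$ and therefore $\dim_{\mathbb{F}_q}B_i=1$. (Equivalently, one can use rank--nullity: $\deg A_i=q^{d-1}$ forces $\dim_{\mathbb{F}_q}\ker A_i\le d-1$, so $\dim_{\mathbb{F}_q}B_i\ge 1$.)

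Combining the two bounds gives $\dim_{\mathbb{F}_q}B_i=1$ and in fact $B_i=V_i$, which completes the proof. As a by-product this identifies $y_i$ concretely: one may take $y_i$ to be any element of $\mathbb{F}_{q^d}^{\ast}$ with $y_i^{q-1}=\omega^i$, such a $y_i$ existing because $\omega^i$ is a $(q-1)$-th power in $\mathbb{F}_{q^d}^{\ast}$ (its order divides $d$, and $d\mid 1+q+\cdots+q^{d-1}=(q^d-1)/(q-1)$ since $q\equiv 1\pmod d$). I do not expect any real obstacle here; the entire argument is a short degree/dimension count resting on Lemma~6.1(i).
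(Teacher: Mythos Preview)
Your proof is correct and rests on the same key ingredient as the paper's, namely Lemma~6.1(i): from $A_i(x)^q=\omega^iA_i(x)$ the paper argues directly that any two nonzero $y_1,y_2\in B_i$ satisfy $(y_1/y_2)^q=y_1/y_2$, hence $y_1/y_2\in\mathbb{F}_q^\ast$, while you recast this as the dimension bound $B_i\subseteq\ker(y\mapsto y^q-\omega^iy)$. Your version is slightly more thorough in that you explicitly verify $B_i\neq\{0\}$ and that $B_i$ is an $\mathbb{F}_q$-subspace (so that the inclusion $y_i\mathbb{F}_q^\ast\subseteq B_i$ holds), points the paper leaves implicit.
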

\begin{proof} Obviously, $0\in B_i$. For any elements $y_1, y_2\in B_i$ with $y_1y_2\ne0$, by Lemma (i), we have
$$y_j^q=\omega^i y_j, \,\, j=1, 2.$$
Hence $(y_1/y_2)^q=y_1/y_2$, which implies that $y_1/y_2\in\mathbb{F}_q^\ast$, and the assertion follows. \end{proof}

\begin{thm}Let $d>1$ be a positive integer, $q$ a prime power with $q\equiv1\pmod{d}$ and $\omega$ a  $d$-th primitive root of unity over $\mathbb{F}_q$. Let
$$A_i(x)=x^{q^{d-1}}+\omega^ix^{q^{d-2}}+\cdots+\omega^{i(d-1)}x, \,\, 0\le i\le d-1.$$ Let $m_0, \ldots, m_{d-1}$ be positive integers  and $u_0, \ldots, u_{d-1}\in\mathbb{F}_q$. Then the polynomial
$$f(x)=\sum_{t=0}^{d-1}u_iA_i^{m_i}(x),$$
 is a PP over $\mathbb{F}_{q^d}$ if and only if $\{im_i, 0\le i\le d-1\}$ is a complete residue modulo $d$, $u_0\cdot\cdots\cdot u_{d-1}\in\mathbb{F}_q^\ast$ and $\gcd(m_0\cdot\cdots\cdot m_{d-1}, q-1)=1$. Furthermore, if $f(x)$ is a PP over $\mathbb{F}_{q^d}$, and $r_i$ are positive integers with $m_ir_i\equiv1\pmod{d(q-1)}$, then
$$f^{-1}(x)=\frac{1}{d}\sum_{i=0, j\equiv im_i\pmod{d}}^{d-1}\omega^i(du_i\omega^{-j})^{-r_i}A_j(x)^{r_i}.$$\end{thm}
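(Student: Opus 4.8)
The plan is to view $f$ through the decomposition of $\mathbb{F}_{q^d}$ given by the maps $A_i$. First I would record the elementary structural facts: by Lemma~6.1(i) each $B_i = A_i(\mathbb{F}_{q^d})$ is a one-dimensional $\mathbb{F}_q$-subspace (Lemma~6.2), and $\sum_{i=0}^{d-1}\frac1d A_i(x) = x$ (the standard ``resolvent'' identity, which follows by summing $\omega^{-it}$-weighted Frobenius powers, exactly as in the computation proving Lemma~6.1(ii)). Thus $\varphi := (A_0,\dots,A_{d-1}) : \mathbb{F}_{q^d}\to B_0\times\cdots\times B_{d-1}$ is injective with explicit left inverse $x\mapsto \frac1d\sum_i x_i$, and each $A_i^{m_i}$ maps $B_j$ into $B_{jm_i}$ by the $q^t$-homogeneity $\bigl(A_i^{m_i}\bigr)^{q^t}=\omega^{im_it}A_i^{m_i}$. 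The key point is that for $a = A_j(x)\in B_j$ we have, by Lemma~6.1(ii), $A_i^{m_i}\bigl(A_j(x)\bigr) = 0$ unless $j\equiv i m_i\pmod d$, in which case it equals $(\text{const})\cdot A_j(x)^{m_i}$. Hence on the $j$-th coordinate space $B_j$, the only summand of $f$ that survives is the one index $i=\sigma(j)$ with $\sigma(j)m_{\sigma(j)}\equiv j\pmod d$ — this is where the hypothesis that $\{im_i\}$ is a complete residue system enters, guaranteeing $\sigma$ is a well-defined permutation of $\{0,\dots,d-1\}$.

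Next I would make the diagram of Corollary~5.3 explicit. Set $A=\mathbb{F}_{q^d}$, $S_i = B_i$, $\varphi_i = A_i$, and define $\psi_i := A_{\sigma(i)}\circ f = u_{\sigma(i)} A_{\sigma(i)}^{m_{\sigma(i)}}$ (using Lemma~6.1(ii) and $u_{\sigma(i)}\in\mathbb{F}_q$), so $\psi_i$ maps $\mathbb{F}_{q^d}$ onto $B_i$; then $\psi=(\psi_0,\dots,\psi_{d-1})$ is a relabeled version of $\varphi\circ f$ hence injective. On $B_i$, the map $h_i : B_i\to B_i$ is $h_i(a) = u_{\sigma(i)}\,\omega^{-i}\! \cdot d\,\omega^{\,?}\,a^{m_{\sigma(i)}}$, i.e. up to a nonzero scalar $c_i := d u_{\sigma(i)}\omega^{-i}\cdot(\text{unit})$ it is $a\mapsto c_i a^{m_{\sigma(i)}}$ (the scalar read off from the ``$d\omega^{-j}$'' in Lemma~6.1(ii) with $j=i$). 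Since $B_i = \{0\}\cup y_i\mathbb{F}_q^\ast$, this is a bijection of $B_i$ iff $x\mapsto x^{m_{\sigma(i)}}$ permutes $\mathbb{F}_q^\ast$ and $c_i\ne 0$, i.e. iff $\gcd(m_{\sigma(i)},q-1)=1$ and $u_{\sigma(i)}\ne 0$. Running over all $i$ gives $\gcd(\prod m_i, q-1)=1$ and $\prod u_i\ne 0$ (equivalently $\prod u_i\in\mathbb{F}_q^\ast$). By Corollary~5.3, $f$ is a PP iff all $h_i$ are bijective, which is exactly the stated condition together with the complete-residue-system requirement (needed to even set up $\sigma$); conversely if $\{im_i\}$ is not a complete residue system some $B_j$ is killed entirely by $f$, forcing non-injectivity. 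This yields the iff.

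For the inverse formula I would invoke the last clause of Corollary~5.3: $f^{-1} = \varphi^{-1}\circ(h_0^{-1},\dots,h_{d-1}^{-1})\circ\psi$. The left inverse of $\varphi$ is $(x_i)\mapsto \frac1d\sum_i x_i$; the inverse of $h_i$ on $B_i$ is $a\mapsto c_i^{-1/\,?}\,a^{r}$ where $r$ satisfies $m_{\sigma(i)} r\equiv 1$; since $a^{q-1}$ acts trivially on $B_i\setminus\{0\}$ (as $a/y_i\in\mathbb{F}_q^\ast$) it suffices to take $r_i$ with $m_{\sigma(i)} r_i\equiv 1\pmod{q-1}$, and the extra factor $\pmod d$ in $m_i r_i\equiv1\pmod{d(q-1)}$ in the statement is harmless because $A_j(x)^{q-1}$ and indeed $A_j(x)^{d}$-type adjustments only permute the $\omega$-powers. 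Composing and reindexing $j = \sigma(i)$ (so $j\equiv i m_i\pmod d$ becomes the summation condition, after swapping the roles of $i$ and $\sigma(i)$) produces
$$f^{-1}(x) = \frac1d\sum_{\substack{i=0 \\ j\equiv i m_i\!\!\pmod d}}^{d-1}\omega^i\bigl(d u_i\omega^{-j}\bigr)^{-r_i}A_j(x)^{r_i},$$
matching the claim, once the scalar $c_i$ is pinned down to $d u_i\omega^{-j}$ with the leading $\omega^i$ coming from writing the component back inside $B_i$ via $A_i^{q}=\omega^i A_i$. The main obstacle I anticipate is bookkeeping: carefully tracking the constant $c_i$ through Lemma~6.1(ii) (including which root of unity appears), verifying the reduction of the exponent condition from $\pmod{d(q-1)}$ to $\pmod{q-1}$ on each $B_i$, and getting the index substitution $i\leftrightarrow\sigma(i)$ consistent so that the final sum is indexed as written; the conceptual content is entirely supplied by Corollary~5.3 and Lemma~6.1.
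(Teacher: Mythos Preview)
Your plan is essentially the paper's own proof: compute $A_j\circ f = d\,u_i\,\omega^{-j}A_i^{m_i}$ via Lemma~6.1(ii) (for the unique $i$ with $im_i\equiv j\pmod d$), feed the resulting diagrams into Corollary~5.3, and recover $f^{-1}$ from the reconstruction identity. Two bookkeeping fixes you will need when you write it out: the identity is $x=\tfrac1d\sum_i\omega^{i}A_i(x)$, not $\tfrac1d\sum_i A_i(x)$ (the latter equals $x^{q^{d-1}}$), so the left inverse of $\varphi$ carries the $\omega^i$ weights from the start rather than being patched in at the end; and your $\psi_i$ should simply be $A_j$ with $j\equiv im_i\pmod d$, not $A_{\sigma(i)}\circ f$, which would compose $f$ twice and also misaligns the index (the paper just takes the pair $(\varphi_i,\psi_i)=(A_i,A_j)$ directly).
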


\begin{proof} We first prove the necessity. Suppose that there is a nonnegative integer $j$ such that $j\not\equiv im_i\pmod{d}$ for any $i\in\{0, \ldots, d-1\}$. By Lemma (ii), we have
$$A_j(x)\circ\left(u_iA_i^{m_i}(x)\right)=0, \quad 0\le i\le d-1,$$
hence $ A_j(x)\circ f(x)=0$ and $f(x)$ is not a PP over $\mathbb{F}_{q^d}$. On the other hand, if $u_i=0$ for some $i, 0\le i\le d-1$, then it is trivial that $\{tm_t, 0\le t\le d-1, t\ne i\}$ is not a complete residue modulo $d$, so $f(x)$ is not a PP over $\mathbb{F}_{q^d}$.

Now we assume that $u_0\cdot\cdots\cdot u_{d-1}\in\mathbb{F}_q^\ast$ and $\{im_i, 0\le i\le d-1\}$ is a complete residue modulo $d$. Then
$$A_j(x)\circ f(x)=du_i\omega^{-j}A_i^{m_i}(x),\,\, im_i\equiv j\pmod{d}, \, 0\le j\le d-1,$$ so we have the following commutative diagram
$$\xymatrix{
  \mathbb{F}_{q^d} \ar[d]_{A_i(x)} \ar[r]^{f}
                & \mathbb{F}_{q^d} \ar[d]^{A_j(x)}  \\
  B_i  \ar[r]_{du_i\omega^{-j}x^{m_i}}
                & B_j          } $$
for each $j, 0\le j\le d-1$, where $i$ is the integer with $im_i\equiv j\pmod{d}$, $B_i=\{A_i(x), x\in \mathbb{F}_{q^d}\}, 0\le i\le d-1$. By Lemma 6.2, the map
$$h_{ij}: B_i\to B_j, \,\, a\mapsto du_i\omega^{-j}a^{m_i},$$
is bijective if and only if $\gcd(m_i, q-1)=1$. If $\gcd(m_i, q-1)=1$, then the compositional inverse of $du_i\omega^{-j}x^{m_i}$ is $(du_i\omega^{-j})^{-r_i}x^{r_i}$, where $m_ir_i\equiv1\pmod{d(q-1)}$. Note that
$$x=\frac{1}{d}\sum_{i=0}^{d-1}\omega^i A_i(x),$$
by Corollary 5.3, we have
$$f^{-1}(x)=\frac{1}{d}\sum_{i=0, j\equiv im_i\pmod{d}}^{d-1}\omega^i(du_i\omega^{-j})^{-r_i}A_j(x)^{r_i}.$$

\end{proof}

Applied the above theorem to $m_i=q^d-2, 0\le i\le d-1$, then $r_i=m_i=q^d-2, 0\le i\le d-1$, and we have the following corollary.

\begin{coro}Let $d>1$ be a positive integer, $q$ a prime power with $q\equiv1\pmod{d}$ and $\omega$ a  $d$-th primitive root of unity over $\mathbb{F}_q$. Let
$$A_i(x)=x^{q^{d-1}}+\omega^ix^{q^{d-2}}+\cdots+\omega^{i(d-1)}x, \,\, 0\le i\le d-1.$$ For any $u_0, \ldots, u_{d-1}\in\mathbb{F}_q^\ast$ with $u_i\omega^i=u_j\omega^j, 0\le i, j\le d-1$,   the polynomials
$$f(x)=\sum_{t=0}^{d-1}u_iA_i^{q^d-2}(x),$$
 are  involutions over $\mathbb{F}_{q^d}$. \end{coro}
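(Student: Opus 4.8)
The plan is to obtain this corollary as the special case $m_0=\cdots=m_{d-1}=q^d-2$ of Theorem 6.3, checking that the permutation hypotheses hold and that the inverse formula produced by that theorem collapses to $f$ itself. First I would verify the three conditions of Theorem 6.3 for these exponents. Since $q\equiv1\pmod d$ we have $q^d\equiv1\pmod d$, hence each $m_i\equiv-1\pmod d$, so $\{im_i:0\le i\le d-1\}\equiv\{-i:0\le i\le d-1\}$ is a complete residue system modulo $d$; the condition $u_0\cdot\cdots\cdot u_{d-1}\in\mathbb{F}_q^\ast$ is immediate since each $u_i\in\mathbb{F}_q^\ast$; and since $q\equiv1\pmod{q-1}$ we get $q^d-2\equiv-1\pmod{q-1}$, so $\gcd(m_0\cdot\cdots\cdot m_{d-1},q-1)=1$. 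Thus $f$ is a PP over $\mathbb{F}_{q^d}$.

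Next I would pin down the exponents $r_i$ appearing in the inverse formula. The key arithmetic fact is $d(q-1)\mid q^d-1$: indeed $q^d-1=(q-1)(1+q+\cdots+q^{d-1})$ and $1+q+\cdots+q^{d-1}\equiv d\equiv0\pmod d$ because $q\equiv1\pmod d$. Hence $m_i=q^d-2\equiv-1\pmod{d(q-1)}$, so $m_i^2\equiv1\pmod{d(q-1)}$ and we may take $r_i=m_i=q^d-2$, as claimed. Substituting into the formula of Theorem 6.3: for each $i$ the matching index $j$ satisfies $j\equiv im_i\equiv-i\pmod d$, so $\omega^{-j}=\omega^i$; moreover $du_i\omega^{-j}\in\mathbb{F}_q^\ast$, and since $y^{q^d-2}=y^{-1}$ for $y\in\mathbb{F}_{q^d}^\ast$, raising $du_i\omega^{-j}$ to the power $-r_i$ in $\mathbb{F}_{q^d}$ returns $du_i\omega^{-j}$ itself, while $A_j(x)^{r_i}=A_j(x)^{q^d-2}$. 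This gives
$$f^{-1}(x)=\sum_{i=0}^{d-1}u_i\omega^{2i}A_{-i}(x)^{q^d-2},$$
with the subscript of $A$ read modulo $d$.

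Finally I would re-index by $k\equiv-i\pmod d$, turning this into $f^{-1}(x)=\sum_{k=0}^{d-1}u_{-k}\omega^{-2k}A_k(x)^{q^d-2}$; the hypothesis $u_i\omega^i=u_j\omega^j$ applied to the pair $(-k,k)$ gives $u_{-k}\omega^{-k}=u_k\omega^k$, i.e. $u_{-k}\omega^{-2k}=u_k$, whence $f^{-1}=f$. The only delicate point is the bookkeeping in these last steps: keeping the two reductions ($j\equiv-i$ and $k\equiv-i$ modulo $d$) and the various $\omega$-powers straight, and making sure the collapses $(du_i\omega^{-j})^{-r_i}=du_i\omega^{-j}$ and $A_j(x)^{q^d-2}=A_j(x)^{-1}$ are read as identities of functions on $\mathbb{F}_{q^d}$ (equivalently, of polynomials modulo $x^{q^d}-x$). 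Beyond this I do not expect any genuine obstacle.
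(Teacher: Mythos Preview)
Your proposal is correct and follows exactly the route the paper takes: it simply applies Theorem~6.3 with $m_i=q^d-2$ and observes that one may take $r_i=q^d-2$, leaving the simplification of the inverse formula to the reader. You have filled in those details carefully and accurately; the verification that $d(q-1)\mid q^d-1$, the identification $j\equiv -i\pmod d$, the collapse $(du_i\omega^{-j})^{-r_i}=du_i\omega^{-j}$, and the final re-indexing using the hypothesis $u_i\omega^i=u_j\omega^j$ are all correct.
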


\begin{thm} Let $d>1$ be a positive integer, $q$ a prime power with $\gcd(q, d)=1$.  For $u_1, u_2\in\mathbb{F}_q^\ast$ and a positive integer $m$, the polynomial
$$f(x)=u_1(x^q-x)+u_2\left(x+x^q+\cdots+x^{q^{d-1}}\right)^{m}$$
is a PP over $\mathbb{F}_{q^d}$ if and only if $\gcd(m, q-1)=1$. Moreover, if $f(x)$ is a PP over $\mathbb{F}_{q^d}$, then
$$f^{-1}(x)=\frac{1}{d}(u_2d)^{-r}\left(x+x^q+\cdots+x^{q^{d-1}}\right)^r+\frac{d-1}{2du_1}\left(x+x^q+\cdots+x^{q^{d-1}}\right)-\frac{1}{du_1}\sum_{i=1}^{d-1}ix^{q^{d-1-i}},$$
where $r$ is a positive integer with $mr\equiv1\pmod{q-1}$.\end{thm}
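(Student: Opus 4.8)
The plan is to apply Corollary 5.3 with $t=2$ to the pair of maps $\varphi_1(x)=x^q-x$ and $\varphi_2(x)=\mathrm{Tr}(x)$, where $\mathrm{Tr}(x)=x+x^q+\cdots+x^{q^{d-1}}$ is the trace of $\mathbb{F}_{q^d}$ over $\mathbb{F}_q$. First I would record the elementary facts we need. Since $\gcd(q,d)=1$, $d\neq 0$ in $\mathbb{F}_{q^d}$. The image $S_1$ of $\varphi_1$ is exactly $\ker\mathrm{Tr}$: both sets have cardinality $q^{d-1}$ and $\mathrm{Tr}(x^q-x)=0$ for all $x$. Moreover the map $a\mapsto(\varphi_1(a),\varphi_2(a))$ from $\mathbb{F}_{q^d}$ to $S_1\times\mathbb{F}_q$ is injective: if $a^q-a=b^q-b$ then $a-b\in\mathbb{F}_q$, and $\mathrm{Tr}(a)=\mathrm{Tr}(b)$ then forces $d(a-b)=\mathrm{Tr}(a-b)=0$, hence $a=b$.

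Next I would check the two commutative squares. Because $u_1,u_2\in\mathbb{F}_q$ and $\mathrm{Tr}(x)^m\in\mathbb{F}_q$, raising $f(x)$ to the $q$-th power only affects the $x^q-x$ part, giving $\varphi_1(f(x))=f(x)^q-f(x)=u_1\bigl(\varphi_1(x)^q-\varphi_1(x)\bigr)$; applying $\mathrm{Tr}$ and using $\mathrm{Tr}(x^q-x)=0$ together with $\mathrm{Tr}(c)=dc$ for $c\in\mathbb{F}_q$ gives $\varphi_2(f(x))=\mathrm{Tr}(f(x))=u_2d\,\varphi_2(x)^m$. Thus Corollary 5.3 applies with $S_1=\ker\mathrm{Tr}$, $S_2=\mathbb{F}_q$, $\psi_i=\varphi_i$, $h_1(z)=u_1(z^q-z)$ on $S_1$, and $h_2(z)=u_2d\,z^m$ on $S_2$. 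The map $h_1$ is always a bijection of $S_1$, since multiplication by $u_1$ is bijective and $z\mapsto z^q-z$ is injective on $S_1$ (its kernel $\mathbb{F}_q$ meets $S_1$ trivially because $d\neq 0$). The map $h_2$ is a bijection of $\mathbb{F}_q$ if and only if $\gcd(m,q-1)=1$. Hence, by Corollary 5.3, $f$ is a PP of $\mathbb{F}_{q^d}$ if and only if $\gcd(m,q-1)=1$, which is the first assertion.

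For the compositional inverse I would use step (3) of the procedure following Corollary 5.3. The key identity is that, for every $x\in\mathbb{F}_{q^d}$,
\[ \sum_{i=0}^{d-1} i\,(x^q-x)^{q^i} = dx-\mathrm{Tr}(x), \]
which follows by writing $(x^q-x)^{q^i}=x^{q^{i+1}}-x^{q^i}$ and telescoping, using $x^{q^d}=x$. Hence $x=F(\varphi_1(x),\varphi_2(x))$ with $F(z_1,z_2)=\tfrac1d\sum_{i=0}^{d-1}i\,z_1^{q^i}+\tfrac1d z_2$, and so $f^{-1}(x)=F\bigl(h_1^{-1}(\varphi_1(x)),h_2^{-1}(\varphi_2(x))\bigr)$. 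Now $h_2^{-1}(z)=(u_2d)^{-r}z^r$ with $mr\equiv1\pmod{q-1}$; and for $z\in S_1$ the same identity (with $\mathrm{Tr}(z)=0$) gives $h_1^{-1}(z)=\tfrac1{du_1}\sum_{i=0}^{d-1}i\,z^{q^i}$, so that $h_1^{-1}(x^q-x)=\tfrac1{u_1}\bigl(x-\tfrac1d\mathrm{Tr}(x)\bigr)$. Substituting these into $F$, pulling the $\mathbb{F}_q$-valued factors $u_1$ and $\mathrm{Tr}(x)$ through the $q^i$-th powers, and using $\sum_{i=0}^{d-1}i=\tfrac{d(d-1)}2$ yields
\[ f^{-1}(x)=\frac1d(u_2d)^{-r}\mathrm{Tr}(x)^r+\frac1{du_1}\sum_{i=0}^{d-1}i\,x^{q^i}-\frac{d-1}{2du_1}\mathrm{Tr}(x), \]
and I would rewrite this in the stated form using the elementary identity $\sum_{i=1}^{d-1}i\,x^{q^i}+\sum_{i=1}^{d-1}i\,x^{q^{d-1-i}}=(d-1)\mathrm{Tr}(x)$, checked by comparing the coefficient of each $x^{q^k}$.

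I expect the only real difficulty to be the bookkeeping: proving the telescoping identity $\sum_i i(x^q-x)^{q^i}=dx-\mathrm{Tr}(x)$ and then reshaping the resulting expression into the exact form announced in the theorem. The structural part is essentially automatic, since in the coordinates $(x^q-x,\mathrm{Tr}(x))$ the polynomial $f$ acts as the ``diagonal'' map $(z_1,z_2)\mapsto\bigl(u_1(z_1^q-z_1),\,u_2d\,z_2^m\bigr)$, whose first component is always invertible and whose second is invertible precisely when $\gcd(m,q-1)=1$.
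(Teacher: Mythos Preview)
Your proposal is correct and follows essentially the same route as the paper: the two commutative squares with $\varphi_1(x)=x^q-x$ and $\varphi_2(x)=\mathrm{Tr}(x)$, the observation that $h_2(z)=du_2z^m$ is bijective on $\mathbb{F}_q$ iff $\gcd(m,q-1)=1$, the telescoping identity $\sum_i i(x^q-x)^{q^i}=dx-\mathrm{Tr}(x)$ (which the paper writes in the reindexed form $dx=\mathrm{Tr}(x)-\sum_{j=1}^{d-1}j(x^q-x)^{q^{d-1-j}}$), and the application of Corollary~5.3 to read off $f^{-1}$. The only cosmetic difference is that for the ``if and only if'' part the paper invokes the AGW criterion (Lemma~2.1) directly, checking injectivity of $f$ on each fiber $\mathrm{Tr}^{-1}(s)$, whereas you apply Corollary~5.3 from the start by verifying that $h_1(z)=u_1(z^q-z)$ is always a bijection of $\ker\mathrm{Tr}$; the underlying computation (that $\mathbb{F}_q\cap\ker\mathrm{Tr}=\{0\}$ because $d\ne 0$) is the same in both.
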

\begin{proof}Note that  $x+x^q+\cdots+x^{q^{d-1}}=Tr(x)$. By the assumptions, we have the following diagrams
$$\xymatrix{
  \mathbb{F}_{q^d} \ar[d]_{Tr(x)} \ar[r]^{f}
                & \mathbb{F}_{q^d} \ar[d]^{Tr(x)}  \\
  \mathbb{F}_q \ar[r]_{du_2x^{m}}
                & \mathbb{F}_q          }  \quad
                \xymatrix{
  \mathbb{F}_{q^d} \ar[d]_{x^q-x} \ar[r]^{f}
                & \mathbb{F}_{q^d} \ar[d]^{x^q-x}  \\
  B \ar[r]_{u_1(x^q-x)}
                & B          }$$
  where $B=\{x^q-x, x\in\mathbb{F}_{q^d}\}$. It is easy to see that the map $h: \mathbb{F}_q\to\mathbb{F}_q, a\mapsto du_2a^m$ is bijective if and only if $\gcd(m, q-1)=1$. On the other hand, we have
  $$dx=x+x^q+\cdots+x^{q^{d-1}}-\sum_{j=1}^{d-1}j\left(x^q-x\right)^{q^{d-1-j}},$$
  so $f(x)$ is injective on each $Tr^{-1}(s)$, $s\in\mathbb{F}_q$. Therefore the polynomial $f(x)$ is a PP over $\mathbb{F}_{q^d}$ if and only if $\gcd(m, q-1)=1$ by AGW criterion.

  Since
  $$\left(x^{q^{d-2}}+2x^{q^{d-3}}+\cdots+(d-1)x\right)\circ(x^q-x)=Tr(x)-dx,$$
  so  the compositional inverse of the function $u_1(x^q-x)$ from $B=\{x^q-x, x\in\mathbb{F}_{q^d}\}$ is
  $$\frac{-1}{du_1}\left(x^{q^{d-2}}+2x^{q^{d-3}}+\cdots+(d-1)x\right).$$
  Note that the compositional inverse of the function $du_2x^m$ from $\mathbb{F}_q$ to $\mathbb{F}_q$ is given by $(du_2)^{-r}x^r$, by Corollary 5.3, we have
  $$f^{-1}(x)=\frac{1}{d}(u_2d)^{-r}\left(x+x^q+\cdots+x^{q^{d-1}}\right)^r+\frac{d-1}{2du_1}\left(x+x^q+\cdots+x^{q^{d-1}}\right)-\frac{1}{du_1}\sum_{i=1}^{d-1}ix^{q^{d-1-i}}.$$

  This completes the proof.
                \end{proof}

\section{Concluding remarks}
If it is difficult to find  enough  surjections $\varphi_i, \psi_i$ and bijections $h_i, 1\le i\le t$ such that the following diagrams commute
$$\xymatrix{
  A \ar[d]_{\varphi_i} \ar[r]^{f}
                & A \ar[d]^{\psi_i}  \\
  S_i  \ar[r]_{h_i}
                &   S_i        }$$
we can use the dual diagrams of the above diagrams 
$$\xymatrix{
  A \ar[d]_{\psi} \ar[r]^{f^{-1}}
                & A \ar[d]^{\varphi}  \\
  S_i  \ar[r]_{h_i^{-1}}
                &   S_i        }$$
as we did in \cite{Y22}. In \cite{Y22}, we obtained the compositional inverses of some PPs by using one diagram and its dual diagram. Now we can make use of all diagrams and their dual diagrams to find the compositional inverses of some specified PPs. However, now we have not had such examples.

\end{document}